\documentclass[12pt]{amsart}

\usepackage{amssymb}
\usepackage{mathtools}

\usepackage{fullpage}
\usepackage{hyperref}

\newtheorem{Theorem}{Theorem}
\newtheorem{Lemma}[Theorem]{Lemma}
\newtheorem{Corollary}[Theorem]{Corollary}
\newtheorem{Proposition}[Theorem]{Proposition}

\theoremstyle{definition}
\newtheorem{Definition}[Theorem]{Definition} 
\newtheorem{Example}[Theorem]{Example}

\theoremstyle{remark}
\newtheorem{Remark}[Theorem]{Remark}

\numberwithin{equation}{section}
\numberwithin{Theorem}{section}

\def\R{{\mathbb R}}
\def\Z{{\mathbb Z}}
\def\C{{\mathbb C}}

\def\cK{{\mathcal K}}

\def\a{{\mathfrak a}}
\def\b{{\mathfrak b}}
\def\g{{\mathfrak g}}
\def\h{{\mathfrak h}}
\def\i{{\mathfrak i}}
\def\n{{\mathfrak n}}

\def\gl{\mathfrak{gl}}
\def\osp{\mathfrak{osp}}
\def\psl{\mathfrak{psl}}
\def\sl{\mathfrak{sl}}

\def\0{{\overline 0}}
\def\1{{\overline 1}}

\def\codim{\mathrm{codim}}
\def\ind{\mathrm{ind}}
\def\Min{\mathrm{Min}}
\def\rank{\mathrm{rank}}
\def\Max{\mathrm{Max}}

\title{On the Index of Borel Subalgebras of Lie Superalgebras}
\author{Simon M.~Goodwin and Samuel Renforth}
\date{\today}

\begin{document}

\begin{abstract}
Let $\b=\h\oplus\n$ be a Borel subalgebra of a basic classical Lie superalgebra over $\C$ with 
$\h$ a Cartan subalgebra. We give an upper bound for the index $\ind(\b)$ of $\b$; in some 
instances this bound is $0$, in which case $\ind(\b)=0$. Additionally we prove that $\ind(\b,\n)=0$, which implies $\ind(\b,\i)=0$ 
for all ideals $\i \subseteq \n$ of $\b$. We also show that $\ind(\b,\a^*)=0$ for all abelian ideals $\a \subseteq \n$ of $\b$. 
These results are achieved by extending the theory of strongly orthogonal roots and the Kostant cascade to the theory of 
generalized root systems developed by Dimitrov and Fioresi.
\end{abstract}

\maketitle

\section{Introduction}
In the 1970s, the {\em index} of a Lie algebra was defined by Dixmier in~\cite{Di}. Let $\g$ be a Lie algebra over $\C$, then 
its index is 
\begin{equation} \label{e:index}
\ind(\g) \coloneqq \min_{\xi \in \g^*}\,\dim(\g_{\xi}),
\end{equation}
where $\g_{\xi}\coloneqq\{x\in\g:\xi([x,y])=0 \text{ for all } y \in \g \}$ is the centraliser in $\g$ of $\xi \in \g^*$ for the coadjoint action. 
There has been great interest in the index of Lie algebras due to its relevance in invariant
theory and in representation theory, for example from A. Joseph, \cite{Jo2}, and D. Panyushev, \cite{Pa1}. 
We refer to the introduction of 
\cite{CMo} and the references therein for an overview of results
about the index of Lie algebras.

The index of a Lie algebra appears to be difficult to calculate in general, although substantial progress 
has been made in specific cases.
If $\g$ is reductive, then it well known that $\ind(\g)$ equals 
$\rank(\g)$. The index of Borel subalgebras is known and was first established in \cite[Theorem 4.1]{Tr} 
with another approach given in~\cite[Th\'eor\`eme 2.9]{TY}. 
Subsequently, the index of parabolic and more generally seaweed subalgebras has been studied, see for example~\cite{Jo2} 
and~\cite{CCH}.

In this paper we initiate the study of the index of a Lie superalgebra $\g$ over $\C$ by considering the definition of the 
index as given by \eqref{e:index}. 
It can be expected that the index is an important invariant of Lie superalgebras so it is of interest to 
determine in key examples. 

In the case that $\g$ is a basic classical Lie superalgebra it is straightforward to show that $\ind(\g) \le \rank(\g)$ by considering regular semisimple elements in $\g_\0$. The proof of equality from the non-super case 
does not extend easily, though it may be expected that there is equality.

Our main result concerns the index of Borel subalgebras of basic classical Lie superalgebras, and we provide an upper bound for this index. We state this in the 
below theorem and explain the notation and terminology used later in the introduction.

\begin{Theorem} \label{T:index0}
Let $\g$ be $\gl(m|n)$ or a basic classical Lie superalgebra excluding $\psl(2|2)$. 
Let $\b$ be a Borel subalgebra of $\g$ with the associated set of positive roots $\Delta^+$. 
Then $\ind(\b)\le \rank(\g)-|\cK(\Delta^+)|$ where $\cK(\Delta^+)$ is the set of
roots in the Kostant cascade of $\Delta^+$.
\end{Theorem}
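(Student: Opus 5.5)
We will follow the classical argument of Tauvel--Yu / Kostant for Borel subalgebras, using a Kostant cascade adapted to the generalized root system $\Delta^+$. Concretely, let $\cK(\Delta^+)=\{\beta_1,\dots,\beta_k\}$. The cascade is built recursively: take a maximal root $\beta_1$ of $\Delta^+$, pass to the subsystem of roots strongly orthogonal to it, and repeat. Choose root vectors $e_{\beta_i}\in\g_{\beta_i}$ and define $\xi\in\b^*$ by
\[
\xi=\sum_{i=1}^k e_{\beta_i}^*,
\]
where $e_{\beta_i}^*$ vanishes on $\h$ and on all root spaces other than $\g_{\beta_i}$. (The dual of $\b$ is identified with $\b^-$ via the form.) Since $\ind(\b)\le\dim\b_\xi$, it suffices to show
\[
\dim \b_\xi\le\rank(\g)-k.
\]

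\textbf{Step 1: the Cartan part.} For $h\in\h$ and $y=e_\alpha$ we have $\xi([h,e_\alpha])=\alpha(h)\,\xi(e_\alpha)$, and this is nonzero only when $\alpha=\beta_i$. Hence $\b_\xi\cap\h=\{h:\beta_i(h)=0\ \forall i\}$. This space has dimension $\rank(\g)-k$, provided the $\beta_i$ are linearly independent as functionals on $\h$. Here one must check that odd isotropic cascade roots (with $(\beta,\beta)=0$) are still linearly independent from the others. We expect to prove this from strong orthogonality together with the recursive construction: each $\beta_j$ has nonzero pairing with some coroot-like element $h_j$ with $\beta_i(h_j)=0$ for $i<j$. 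This gives a triangular matrix.

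\textbf{Step 2: the nilpotent part.} We show that every $x=h+\sum_{\alpha}c_\alpha e_\alpha\in\b_\xi$ has all $c_\alpha=0$. Order $\Delta^+$ compatibly with the cascade. For each positive root $\alpha$, the cascade gives a first $\beta_i$ with $\beta_i-\alpha\in\Delta^+\cup\{0\}$ ("$\alpha$ lies in the Heisenberg-type set of $\beta_i$"). Pairing $x$ against $y=e_{\beta_i-\alpha}$, or against $y=h$ when $\alpha=\beta_i$, isolates $c_\alpha$ up to contributions from roots that are higher in the order; we then proceed by downward induction. For $\alpha=\beta_i$, testing against suitable $h\in\h$ forces $c_{\beta_i}=0$; this again uses the nondegeneracy from Step 1. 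Once all $c_\alpha=0$, we get $x=h\in\b_\xi\cap\h$, and the bound follows. The super signs in $\xi([x,y])$ only affect scalars, so they do not matter for vanishing arguments.

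\textbf{Main obstacle.} The key combinatorial input is the analogue, for Dimitrov--Fioresi generalized root systems, of the decomposition
\[
\Delta^+=\bigsqcup_i H_{\beta_i},
\]
where each $H_{\beta_i}$ consists of $\beta_i$ together with pairs $\alpha,\ \beta_i-\alpha$. We also need that $[\g_\alpha,\g_{\beta_i-\alpha}]\neq0$, so that the pairing in Step 2 is nondegenerate. In the super setting this can fail: for isotropic $\beta$, the sum of two odd roots may have a vanishing bracket in degenerate cases. This is exactly why $\psl(2|2)$ must be excluded, since its root spaces are not one-dimensional, or equivalently $\alpha$ and $-\alpha$ behave badly. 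We expect to establish the decomposition case by case, or via the axioms of generalized root systems, and to verify the bracket nonvanishing from the structure constants of basic classical superalgebras. This is the step where most of the work lies.
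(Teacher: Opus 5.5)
Your argument is the paper's proof in dual form. The paper takes the same functional, realised as $e_{-\cK}=\sum_{\theta\in\cK}e_{-\theta}\in\g/\n\cong\b^*$, and shows that $[\b,e_{-\cK}]+\n$ contains $\h_{\cK}\oplus\bigoplus_{\alpha\in\Delta^+}\g_{-\alpha}$. It bounds the orbit from below where you bound the stabiliser from above, and by rank--nullity these are the same computation.

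The gap is that you defer exactly the two inputs that carry the proof. The first is the decomposition you call the main obstacle. It is Theorem~\ref{T:KCstrongorth}(iv),(v), and it needs no case-by-case work. The key step is (iii). Suppose $\theta-\delta\in\Delta^+$ but $\delta$ was removed at an earlier stage by some $\phi\in\Max(\Delta^+_{i_\phi})$. Then $\langle\phi,\delta\rangle>0$, so $\langle\phi,\theta-\delta\rangle<0$, and (GRS2) gives $\phi+(\theta-\delta)\in\Delta$, contradicting maximality of $\phi$. Disjointness (v) holds because distinct maximal roots of the same stage lie in orthogonal irreducible components. With (v), your Step~2 needs no induction. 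If $\alpha\in(\theta-\Delta^{(-)}_{\theta})\setminus\{\theta\}$, then $\theta-\alpha$ lies in $\Delta^{(-)}_{\phi}$ only for $\phi=\theta$ and is not in $\cK$. Hence $\xi([x,e_{\theta-\alpha}])$ is exactly a nonzero multiple of $c_\alpha$. That this multiple is nonzero is just $[\g_\alpha,\g_\beta]=\g_{\alpha+\beta}$ for one-dimensional root spaces, so no structure-constant analysis is needed. The exclusion of $\psl(2|2)$ comes from its two-dimensional root spaces, not from isotropy.

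The second input is the linear independence of $\cK$ in $\h^*$. Both your Step~1 and the vanishing of the $c_{\beta_i}$ rest on it. The triangular argument you sketch fails as stated, because the natural elements $h_\theta=[e_\theta,e_{-\theta}]$ satisfy $\theta(h_\theta)=0$ precisely when $\theta$ is isotropic. The missing idea is to use the positive-definite inner product $\langle\cdot,\cdot\rangle$ of the generalized root system, not the invariant form. By (GRS1) and (GRS2), distinct strongly orthogonal roots satisfy $\langle\theta,\phi\rangle=0$. So $\cK$ consists of pairwise orthogonal nonzero vectors and is linearly independent; isotropy with respect to $(\cdot,\cdot)$ plays no role. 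This transfers to $\h^*$ whenever the space $V$ of the generalized root system is identified with $\h_\R^*$.

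Your caution is nonetheless warranted for $\psl(n|n)$, where $V$ has dimension $2n-1$ but $\dim\h=2n-2$. For the distinguished Borel, $\cK=\{\epsilon_k-\delta_{n+1-k}:1\le k\le n\}$, and $\sum_{\theta\in\cK}\theta$ is the supertrace, which vanishes on $\h$. Equivalently, $\sum_\theta h_\theta$ is the identity, which is zero in $\psl(n|n)$. There the stabiliser of your $\xi$ has dimension $n$ rather than $n-2$, and the paper's proof fails at the same point (its claim that the $h_\theta$ are linearly independent). For $\psl(3|3)$ the statement itself appears to fail, by my own computation. Using that $\n_\1$ is abelian and that $\b_\0$ preserves the determinant on $\n_\1^*$ (the lower-left $3\times 3$ block), I get $\ind(\b)=3$, while $\rank(\g)-|\cK(\Delta^+)|=1$.
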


The key to proving this theorem is extending the theory of strongly orthogonal roots and the Kostant cascade to the theory of 
generalized root systems (GRSs) developed by Dimitrov and Fioresi in~\cite{DF}.
In Theorem~4.7 of {\em loc.\ cit.\ }it is proved that basic classical Lie superalgebra root systems are 
examples of GRSs. In turn this allows the proof of \cite[Th\'eor\'eme 2.9]{TY}
to be extended to Lie superalgebras. The statement of Theorem~\ref{T:index0}
is the same as the corresponding statement for simple Lie algebras except that it is an inequality 
rather than equality. It may be expected that this inequality is actually an equality, though
the methods to prove this in the non-super case do not extend easily. As in the case for simple Lie algebras
it is sometimes the case that $|\cK(\Delta^+)| = \rank(\g)$, and in those cases we do have the 
equality $\ind(\b) = 0$; an example being $\g = \osp(m,2n)$ with $m \not\equiv 2 \bmod 4$ and 
the standard choice of $\b$.  We refer to Remark~\ref{R:psl22} 
for an explanation of why $\psl(2|2)$ is excluded, where we also note that this case
should be accessible through computational methods.

The {\em Kostant cascade}, originally defined by Kostant in~\cite{Ko} and named by 
Panyushev in~\cite{Pa2}, is a subset of a Lie algebra root system $\Delta$ determined 
by the algorithm outlined below, with more detail given in \S\ref{ss:sorkc}. 
The algorithm produces a nested sequence of proper subsystems
\begin{equation} \label{e:Deltai}
\Delta=\Delta_0\supsetneq\Delta_1\supsetneq\cdots\supsetneq
\Delta_N\supsetneq\Delta_{N+1}=\varnothing,
\end{equation}
where $\dim(\R\Delta_{i+1}) < \dim(\R\Delta_{i})$. The sequence is constructed by starting with a set of positive roots $\Delta^+$ of $\Delta$, setting
$\Delta_0^+ = \Delta^+$, then proceeding inductively as follows:
\begin{enumerate}
\item[(KC1)] find the highest roots $\Max(\Delta^+_i)$ of $\Delta^+_i$;
\item[(KC2)] construct the positive system $\Delta^+_{i+1}$ of $\Delta_{i+1}$ by removing all roots of the form $\theta-\delta\in\Delta^+\setminus \{0\}$ from $\Delta^+_i$, where $\theta \in \Max(\Delta^+_i) \cup \{0\}$ and $\delta\in\Delta^+_i$;
\item[(KC3)] repeat until $\Delta^+_{N+1} = \varnothing$.
\end{enumerate}
It is straightforward to verify that 
$\Delta^+_{i+1} = \{\alpha \in \Delta_i^+ : \langle \alpha,\theta \rangle=0 \text{ for all } \theta \in \Max(\Delta_i^+)\}$ given the properties of the standard inner product $\langle\_\,,\_\rangle$ on $\R\Delta$, and from this inductively that
each $\Delta_i$ is a root system.
One then takes 
\begin{equation} \label{e:KC}
\cK(\Delta^+)\coloneqq\bigsqcup_{i=0}^N\Max(\Delta^+_i)
\end{equation}
to be the roots in the Kostant cascade of $\Delta^+$. 

The Kostant cascade can likewise be defined for GRSs using the exact same algorithm.
In \S\ref{ss:sorkc} we observe that the procedure for defining the Kostant cascade is valid for GRSs.
Furthermore, we show that the output, $\cK(\Delta^+)$, is 
a strongly orthogonal subset of $\Delta$, which, in particular, implies $|\cK(\Delta^+)| \le \dim(\R\Delta)$.

Now let $\g$ be a Lie superalgebra and let $V$ be a $\g$-module. We can extend the index 
to $\g$-modules, see for example~\cite{Pa1}, by defining
\begin{equation}\label{e:index2}
\ind(\g,V)\coloneqq\underset{\xi\in V^*}{\min}\,\codim_{V^*}(\g\cdot\xi)=\dim(V)-\underset{\xi\in V^*}{\max}\,\dim(\g\cdot \xi).
\end{equation}
Note $\ind(\g,\g)=\ind(\g)$ if $\g$ is viewed as a $\g$-module under the adjoint action.

In~\cite{Jo1} it is established that if $B$ is a Borel subgroup of a reductive algebraic group 
$G$ over an algebraically closed field, and if $\b = \h \oplus \n$ is the Lie algebra of $B$, then $B$ has a dense orbit in $\n^*$. 
This implies that $\ind(\b,\n)=0$, and more generally that $\ind(\b,\i)=0$ for all ideals $\i \subseteq \n$ of $\b$.  

The following theorem is a consequence of the proof of Theorem~\ref{T:index0}, extending the 
results for $\ind(\b,\n)$ and $\ind(\b,\i)$ to basic classical Lie superalgebras.

\begin{Corollary}\label{C:indn}
Let $\g$ be $\gl(m|n)$ or a basic classical Lie superalgebra excluding $\psl(2|2)$. Let $\b=\h\oplus\n$ be a Borel subalgebra of $\g$. Then $\ind(\b,\n)=0$, and in general $\ind(\b,\i)=0$ for all ideals $\i \subseteq \n$ of $\b$.
\end{Corollary}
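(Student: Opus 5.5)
Following the Tauvel--Yu approach behind Theorem~\ref{T:index0}, I will use the Kostant cascade $\cK(\Delta^+)$ to write down an explicit $\xi\in\n^*$ whose $\b$-orbit has dimension $\dim\n$. For $\alpha\in\Delta^+$ let $e_\alpha$ span $\g_\alpha$, and let $e_\alpha^*\in\n^*$ be dual to these root vectors. Generic functionals on the odd part are not needed: $\n^*$ is a superspace and $\dim(\b\cdot\xi)$ is defined as the dimension of the image of $x\mapsto x\cdot\xi$. I take
\[
\xi=\sum_{\beta\in\cK(\Delta^+)} e_\beta^*,
\]
or, more generally, $\xi=\sum_\beta c_\beta e_\beta^*$ with general $c_\beta\neq0$. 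We need $\b\cdot\xi=\n^*$, equivalently that the map $\b\to\n^*$, $x\mapsto x\cdot\xi$, is surjective.

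\textbf{Step 1: the map is triangular.} Since $\xi$ is a weight combination, $\h\cdot\xi=\operatorname{span}\{\beta(h)e_\beta^*\}$. Strong orthogonality of $\cK(\Delta^+)$ gives linear independence of the cascade roots, so $\h\cdot\xi$ is exactly $\operatorname{span}\{e_\beta^*:\beta\in\cK(\Delta^+)\}$. Strong orthogonality here is proved for GRSs in \S\ref{ss:sorkc}; for the $\gl(m|n)$ case and other non-simple cases I would check independence inside $\h^*$. For a non-cascade $\alpha\in\Delta^+$, the inductive structure of the cascade places $\alpha$ in a unique layer $\Delta_i^+\setminus\Delta_{i+1}^+$. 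There it is not orthogonal to some $\theta\in\Max(\Delta^+_i)$, and by (KC2) we have $\alpha=\theta-\delta$ with $\delta\in\Delta_i^+$, so that $[e_\delta,e_\alpha]$ is a nonzero multiple of $e_\theta$. Then $(e_\delta\cdot\xi)(e_\alpha)=\pm\xi([e_\delta,e_\alpha])\neq0$. To get a bijective assignment $\alpha\mapsto\delta(\alpha)$ I use the Kostant-type fact that for each $\theta$ the roots $\Phi_\theta=\{\gamma\in\Delta_i^+:\langle\gamma,\theta\rangle\ne0,\ \gamma\ne\theta\}$ split into pairs $\{\gamma,\theta-\gamma\}$. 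For GRSs this requires checking that $\theta-\gamma$ is a root whenever $\langle\gamma,\theta\rangle\neq0$. This is where the GRS axioms of Dimitrov--Fioresi enter, including the isotropic case $\langle\gamma,\gamma\rangle=0$, and where I expect $\psl(2|2)$ to fail because its root spaces have multiplicity $>1$.

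\textbf{Step 2: nondegeneracy.} Order the positive roots by height, layer by layer. The matrix of $x\mapsto x\cdot\xi$, with rows indexed by $\h\oplus\operatorname{span}\{e_{\delta(\alpha)}\}$ and columns by $e^*_\gamma$, is block triangular with nonzero diagonal entries. The point is that $(e_\delta\cdot\xi)(e_\gamma)\neq0$ forces $\gamma+\delta\in\cK$, and within a layer the pairing is determined. Cross terms between different cascade elements $\theta\ne\theta'$ vanish by strong orthogonality, since $\gamma+\delta=\theta'$ would make $\gamma$ non-orthogonal to $\theta'$, which fixes its layer. I would first handle this with generic coefficients $c_\beta$, so that any accidental cancellation from signs or structure constants of odd root vectors is avoided: the determinant is then a nonzero polynomial in the $c_\beta$. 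This gives $\dim(\b\cdot\xi)=\dim\n$, hence $\ind(\b,\n)=0$. I expect the main obstacle to be the pairing and bijectivity in the super setting, namely odd isotropic $\gamma$ with $2\gamma$ not a root, together with the case $\gamma=\theta/2$ in $\osp$ with $2\gamma$ a root.

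\textbf{Step 3: ideals.} For an ideal $\i\subseteq\n$ of $\b$, restriction $\n^*\to\i^*$ is a surjective $\b$-module map. Hence $\b\cdot\xi=\n^*$ maps onto $\b\cdot(\xi|_\i)=\i^*$, giving $\ind(\b,\i)=0$.
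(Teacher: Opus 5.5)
Up to dualising, this is the paper's argument. Under $\n^*\cong\g/\b$ your $\xi=\sum_{\beta\in\cK(\Delta^+)}e^*_\beta$ is the paper's $e_{-\cK(\b)}$, the torus supplies the $e^*_\theta$, $e_{\theta-\alpha}$ supplies $e^*_\alpha$, and your Step~3 is the paper's last line. Step~2 is more than you need. By Theorem~\ref{T:KCstrongorth}(iii)--(iv), which your case check also gives, each nonzero $\gamma\in\Delta^+$ has exactly one $\theta\in\cK(\Delta^+)$ with $\theta-\gamma\in\Delta^+$. So $e_{\theta-\alpha}\cdot\xi$ is exactly a nonzero multiple of $e^*_\alpha$, and no pairing, ordering or generic $c_\beta$ is required; the case $\theta=2\gamma$ is harmless since $[e_\gamma,e_\gamma]\neq 0$. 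Using all of $\h$, as you do, is also better than the paper's $\h_{\cK}$. For isotropic $\theta$ one has $\theta(h_\theta)=0$, and $\phi(h_\theta)=0$ for $\phi\in\cK(\Delta^+)\setminus\{\theta\}$, so $\h_{\cK}$ may act on $\xi$ by zero (in $\gl(1|1)$, $h_\theta$ is central).

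The genuine gap is in Step~1: transferring linear independence of $\cK(\Delta^+)$ from the GRS to $\h^*$. You flag $\gl(m|n)$, where this is harmless, but the case that breaks is $\psl(n|n)$. As the paper notes, its GRS sits in the $(2n-1)$-dimensional span of the roots of $\gl(n|n)$, while $\dim\h=2n-2$ and $\sum_i\epsilon_i=\sum_j\delta_j$ on $\h$. For the distinguished Borel, $\cK(\Delta^+)=\{\epsilon_i-\delta_{n+1-i}:1\le i\le n\}$, whose sum vanishes on $\h$. So for every choice of $c_\beta$, $\h\cdot\xi$ has codimension one in $\mathrm{span}\{e^*_\theta:\theta\in\cK(\Delta^+)\}$, and $\n\cdot\xi$ cannot fill the gap because $\theta'-\theta\notin\Delta^+$ for $\theta,\theta'\in\cK(\Delta^+)$.

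No other $\xi$ works either. Here $\n_{\1}$ is the full odd upper-right block and $[\n_{\1},\n_{\1}]=0$. Write $\xi|_{\n_{\1}}$ as $B\mapsto\mathrm{tr}(YB)$ with $Y$ an $n\times n$ matrix. Then the restriction of $\b\cdot\xi$ to $\n_{\1}$ corresponds, via the same pairing, to $\{DY-YA\}$, where $\mathrm{diag}(A,D)$ runs over representatives of $\b_{\0}$. That is, $A,D$ are upper triangular with $\mathrm{tr}A=\mathrm{tr}D$. If $Y$ is invertible, this set lies in $\{Z:\mathrm{tr}(Y^{-1}Z)=0\}$. If $Y$ is singular, every $DY-YA$ maps $\ker Y$ into $\mathrm{im}\,Y$. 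Either way it is a proper subspace, so $\ind(\b,\n)\ge 1$ for $\psl(n|n)$, $n\ge 3$.

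Thus the statement is false in that case, and the paper's proof has the same hole. Your argument, like the paper's, is valid exactly when $\cK(\Delta^+)$ is linearly independent in $\h^*$. This holds in all other cases, and for $\psl(n|n)$ precisely when $\sum_i\epsilon_i-\sum_j\delta_j\notin\mathrm{span}\,\cK(\Delta^+)$.
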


Returning to Lie algebras, it is in general the case that $\ind(\b,\a^*)\ne 0$ for $\a \subseteq \n$ an ideal of $\b$, although it was shown by Panyushev and R{\"o}hrle that $\a^*$ has finitely many $B$-orbits when $\a$ is abelian by~\cite[Corollary 2.4]{PR}. This was later strengthened by Panyushev with~\cite[Theorem 3.2]{Pa3}, which gives a bijection between the $B$-orbits of $\a$ and the subsets of strongly orthogonal roots in $\a$. The finiteness of $B$-orbits implies that $\ind(\b,\a^*)=0$ for $\a  \subseteq  \n$ an abelian ideal of $\b$.

We extend the result for $\ind(\b,\a^*)$ to Lie superalgebras when $\a \subseteq \n$ is an abelian ideal of $\b$. This is done by employing a comparable algorithm to that for the Kostant cascade where we add instead of subtract roots, relying on Panyushev's observations on strongly orthogonal roots in~\cite{Pa3} applied to GRSs.

\begin{Theorem}\label{T:inda}
Let $\g$ be $\gl(m|n)$ or a basic classical Lie superalgebra excluding $\psl(2|2)$. Let $\b=\h\oplus\n$ be a Borel subalgebra of $\g$. Then $\ind(\b,\a^*)=0$ for all abelian ideals $\a \subseteq \n$ of $\b$.
\end{Theorem}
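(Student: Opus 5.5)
To prove $\ind(\b,\a^*)=0$ we must find $x\in\a$ (since $(\a^*)^*=\a$) with $\b\cdot x=\a$, i.e.\ $\dim[\b,x]=\dim\a$. Because $\a$ is an abelian ideal contained in $\n$, it is a sum of root spaces $\a=\bigoplus_{\alpha\in\Phi}\g_\alpha$ for an upper-closed set $\Phi\subseteq\Delta^+$ (closed under adding positive roots within $\Delta^+$). In addition, $\alpha+\beta\notin\Delta$ for $\alpha,\beta\in\Phi$, apart from odd isotropic $\alpha=\beta$, where $[\g_\alpha,\g_\alpha]=0$ anyway. The plan is to take $x=\sum_{\gamma\in S}e_\gamma$ for a suitable strongly orthogonal subset $S\subseteq\Phi$, with $e_\gamma$ a root vector. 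We then show that $[\h,x]+[\n,x]$ contains every root space of $\a$.

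\emph{Step 1: constructing $S$.} We mimic the Kostant cascade, but add roots instead of subtracting them, following Panyushev~\cite{Pa3}. Start with $\Phi_0=\Phi$ and pick a minimal root $\gamma_1\in\Min(\Phi_0)$. Remove from $\Phi_0$ all roots of the form $\gamma_1+\delta$ with $\delta\in\Delta^+\cup\{0\}$ to get $\Phi_1$. Then repeat with $\Min(\Phi_1)$, and so on. The GRS facts established for the cascade in \S\ref{ss:sorkc} should give the following. The roots obtained are pairwise strongly orthogonal: neither $\gamma_i\pm\gamma_j$ is a root nor zero. For the sum this holds because $\a$ is abelian; for the difference it follows from the removal rule and minimality. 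In particular $S$ is linearly independent. Moreover, every $\alpha\in\Phi$ is of the form $\gamma+\delta$ with $\gamma\in S$ and $\delta\in\Delta^+\cup\{0\}$, where $\gamma$ is the first cascade root whose removal step swept $\alpha$ away.

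\emph{Step 2: showing $\b\cdot x=\a$.} We proceed by downward induction along the order in which roots are removed. The torus part $[\h,x]$ has dimension $|S|$ by linear independence. Using $h\in\h$ with prescribed values $\gamma(h)$, it yields each $e_\gamma$ individually. Next take $\alpha=\gamma_i+\delta$ with $\delta\in\Delta^+$ and $\alpha$ removed at step $i$. Then $[e_\delta,x]=[e_\delta,e_{\gamma_i}]+\sum_{j\ne i}[e_\delta,e_{\gamma_j}]$, and the first term is a nonzero multiple of $e_\alpha$. For this we need $[\g_\delta,\g_{\gamma_i}]=\g_{\alpha}$, which holds for basic classical superalgebras, since root spaces are one-dimensional and brackets of root vectors whose sum is a root are nonzero; this is where $\psl(2|2)$ is excluded. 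The other terms lie in root spaces $\gamma_j+\delta$. We arrange the order so that these are roots already handled, or removed at earlier steps. Concretely, filter $\a$ by $V_k=\bigoplus\{\g_\beta:\beta$ removed at step $\le k\}$ and work modulo a suitable part of the filtration, giving a triangular argument. A dimension count then gives $\dim\b\cdot x=|\Phi|=\dim\a$.

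\emph{Main obstacle.} The hard step is controlling the cross terms $[e_\delta,e_{\gamma_j}]$ for $j\neq i$. If $\gamma_j+\delta$ is a root, it lies in $\Phi$ because $\Phi$ is upper closed. We need it to be removed at a step compatible with the triangular ordering; alternatively, a different choice $\delta'$ can produce $e_\alpha$ cleanly. In the Lie algebra case, Panyushev handles this using properties of strongly orthogonal roots and lengths. For GRSs, isotropic roots break the usual length arguments, so I would try first the following. I would show directly from the GRS axioms of Dimitrov--Fioresi that $\gamma_j+\delta\in\Delta$ forces $\gamma_j+\delta$ to be removed at step $j$, which is a valid ordering when $j<i$. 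For $j>i$, I would show that $\gamma_j+\delta\in\Delta$ contradicts the construction, since $\gamma_j$ survived step $i$. Should this fail in an exceptional case, the fallback is a case check over the finitely many GRS types, and it is precisely a leftover degenerate case that forces the exclusion of $\psl(2|2)$.
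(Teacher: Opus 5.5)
\textbf{Verdict: there is a genuine gap at the key step.} Your overall strategy matches the paper's:
\begin{itemize}
\item take $x=e_S$ for a strongly orthogonal $S\subseteq\Delta^+_{\a}$, built by repeatedly taking minimal roots and discarding everything above them (your one-root-at-a-time variant of $\mathcal{A}(\a)$ is also strongly orthogonal, by the same argument);
\item use $\h$ and the linear independence of $S$ to obtain each $e_\gamma$ with $\gamma\in S$;
\item use $[e_\delta,x]$ to reach the remaining roots.
\end{itemize}
However, the step you yourself flag as the main obstacle, controlling the cross terms $[e_\delta,e_{\gamma_j}]$ for $j\ne i$, is left unproved. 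You only describe what you ``would try'', with a fallback to an unspecified case check. The case $j<i$ is indeed harmless in your triangular scheme, since any $\gamma_j+\delta\in\Delta^+$ is removed at a step $\le j$. The case $j>i$ does not follow from ``$\gamma_j$ survived step $i$''. That fact only gives strong orthogonality of $\gamma_i$ and $\gamma_j$, and you still need an argument converting this into $\gamma_j+\delta\notin\Delta$. As written, the proof is incomplete exactly where the content lies.

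The missing idea is Lemma~\ref{L:addstrongorthog}(i), which kills every cross term outright, so no filtration or triangular ordering is needed. Let $\alpha,\beta\in A$ be distinct strongly orthogonal roots of an abelian ideal, and let $\gamma\in\Delta^+\setminus\{0\}$ with $\alpha+\gamma,\beta+\gamma\in\Delta^+$. There are two cases.
\begin{itemize}
\item If, say, $\langle\alpha,\gamma\rangle<0$, then $\langle\beta+\gamma,\alpha\rangle<0$ because $\langle\alpha,\beta\rangle=0$. So (GRS2) gives $(\beta+\gamma)+\alpha\in\Delta$, with both $\beta+\gamma$ and $\alpha$ in $A$. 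This contradicts abelianness.
\item If $\langle\alpha,\gamma\rangle\ge 0$ and $\langle\beta,\gamma\rangle\ge 0$, then $\langle\alpha+\gamma,\beta+\gamma\rangle\ge\langle\gamma,\gamma\rangle>0$. So (GRS1) gives $\alpha-\beta\in\Delta\setminus\{0\}$, contradicting strong orthogonality.
\end{itemize}
Hence for $\beta=\gamma_i+\delta$ one gets exactly $[e_\delta,e_S]=\lambda_{\delta,\gamma_i}e_\beta$.

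Your worry that isotropic roots break length arguments is misplaced. The GRS inner product $\langle\_\,,\_\rangle$ is positive definite, so $\langle\gamma,\gamma\rangle>0$ is always available. Isotropy refers to the form $(\_\,,\_)$ on $\g$, which plays no role here. Similarly, $\psl(2|2)$ is excluded only because of its two-dimensional root spaces, as you correctly said earlier, not because of a leftover degenerate case in this combinatorics.
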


We outline the contents of this paper. In Section \ref{S:abelianideals} we give a brief overview of basic classical Lie superalgebras and the (abelian) ideals of their Borel subalgebras. In Section \ref{S:GRS} we review the theory of generalized root systems and provide the definitions and results that we require, noting in particular that Lie (super)algebra root systems are GRSs. In \S\ref{ss:sorkc} we define subsets of strongly orthogonal roots and extend the Kostant cascade to GRSs. In \S\ref{ss:abelianorthog} we find connections between abelian ideals of GRSs and strongly orthogonal roots. In Section \ref{s:evalindex} we use our findings to prove Theorem~\ref{T:index0}, Corollary~\ref{C:indn}, and Theorem~\ref{T:inda}.

\section{Lie Superalgebras and Ideals} \label{S:abelianideals}

\subsection{Basic classical Lie superalgebras}
In the 1970s, the finite-dimensional simple Lie superalgebras $\g$ over $\C$ were classified by Kac in~\cite{Ka}. We refer the reader 
to~\cite{CW} or \cite{Mu} as general references for the theory of Lie superalgebras used in this paper. 
Amongst the simple Lie superalgebras are the {\em basic classical} Lie superalgebras, which are those where $\g_\0$ is a reductive Lie algebra, and $\g$ admits a non-degenerate even invariant bilinear form $(\_\,,\_)$. These simple Lie superalgebras have the most in common with simple Lie algebras due to the Killing form-esque non-degeneracy and invariance of $(\_\,,\_)$. The basic classical Lie superalgebras that are not Lie algebras are classified as follows:
\begin{equation*}
\begin{matrix}
\sl(m|n)&\psl(n|n)&\osp(M|2n)&\text{D}(2,1;\alpha)&\text{F}(4)&\text{G}(3).
\\
m > n\ge 1& n\ge 2& M,n\ge 1&\alpha\in\C\setminus\{0,-1\}&&
\end{matrix}
\end{equation*}

Let $\g = \g_\0 \oplus \g_\1$ be a basic classical Lie superalgebra, let $\h$ be a 
Cartan subalgebra of $\g_\0$, and let $\rank(\g)\coloneqq \dim(\h)$ be the rank of $\g$. 
Then $\g$ has the root spaces $\g_{\alpha}\coloneqq\{x\in\g \mid [h,x] = \alpha(h)x \text{ for all h} \in \h\}$ for 
all $\alpha\in\h^*$, root system $\Delta=\{\alpha\in\h^*\setminus\{0\}:\g_{\alpha}\ne 0\}$, and 
root space decomposition $\g=\h\oplus\bigoplus_{\alpha\in\Delta}\g_{\alpha}$. 
A subset $\Delta^+ \subseteq \Delta$ is a set of positive roots of $\Delta$ if
exactly one of $\alpha$ and $-\alpha$ is an element of $\Delta^+$ for all $\alpha\in\Delta$, and
if $\alpha,\beta\in\Delta^+$ and $\alpha+\beta\in\Delta$, then $\alpha+\beta\in\Delta^+$.
The subalgebra associated with $\Delta^+$ is then denoted $\n=\bigoplus_{\alpha\in\Delta^+}\g_{\alpha}$, and 
the corresponding Borel subalgebra of $\g$ is given by $\b=\h\oplus\n$.

\begin{Remark}
The general linear Lie superalgebra $\gl(m|n)$ is not simple but obeys the other two conditions
for being basic classical. As such, $\gl(m|n)$ has many properties of a basic classical Lie superalgebra, 
so the techniques of this paper also apply to this case. Throughout the paper we refer only to 
basic classical Lie algebras but emphasise that all results apply also in the case $\g =\gl(m|n)$.
\end{Remark}

\begin{Remark} \label{R:psl22}
Of the basic classical Lie superalgebras, $\psl(2|2)$ is the only one for which not all root spaces are 
1-dimensional, as it has some 2-dimensional root 
spaces. For this reason some of our arguments do not apply to $\psl(2|2)$ and we thus omit $\psl(2|2)$ in some parts of the paper and it is excluded from the statement of our main results. Given the size of $\psl(2|2)$, we expect that it can be dealt with by explicit calculations.
We implicitly do not include this case throughout the remainder of this paper to avoid dealing with the technical issues around this.
\end{Remark}

\subsection{(Abelian) Ideals of Borel Subalgebras}
We continue to let $\g$ be a basic classical Lie superalgebra, where we implicitly assume $\g \ne \psl(2|2)$ as 
explained in Remark~\ref{R:psl22}.

Recall that a $\Z_2$-graded subspace $\i  \subseteq  \n$ is an {\em ideal} of $\b$ if $[\b,\i] \subseteq \i$, 
and a $\Z_2$-graded subspace $\a \subseteq \n$ is an {\em abelian ideal} of $\b$ if $[\b,\a] \subseteq \a$ 
and $[\a,\a]=0$.

Let $\i \subseteq \n$ be an ideal of $\b$. As $\h$ acts diagonally on $\g$, 
it follows that there exists a subset $\Delta^+_{\i}  \subseteq  \Delta^+$ such that
\begin{equation} \label{e:idecomp}
\i=\bigoplus_{\alpha\in\Delta^+_{\i}}\g_{\alpha}.
\end{equation}
Note that $[\g_{\alpha},\g_{\beta}]=\g_{\alpha+\beta}$ for all $\alpha,\beta\in\Delta$ with 
$\alpha\ne-\beta$, see for example \cite[Lemma 2.1.1]{Mu}. In combination with \eqref{e:idecomp}, 
this allows us to translate $\i$ being an ideal into the language of roots:
\begin{align*} 
[\b,\i] \subseteq  \i&&\text{implies}
&&\bigoplus_{\substack{\alpha\in\Delta^+ \\ \beta\in\Delta^+_{\i}}}[\g_{\alpha},\g_{\beta}] \subseteq  \i
&&\text{implies}
&&\bigoplus_{\substack{\alpha\in\Delta^+ \\ \beta\in\Delta^+_{\i}}}\g_{\alpha+\beta} \subseteq 
\bigoplus_{\gamma\in\Delta_{\i}^+}\g_{\gamma}.
\end{align*}
If $\a \subseteq \n$ is an abelian ideal of $\b$, then we further find
\begin{align*}
[\a,\a]=0
&&\text{implies}&&\bigoplus_{\alpha,\beta\in\Delta^+_{\a}}[\g_{\alpha},\g_{\beta}]=0
&&\text{implies}&&\bigoplus_{\alpha,\beta\in\Delta^+_{\a}}\g_{\alpha+\beta}=0.
\end{align*}
This motivates the following definition for subsets of $\Delta^+$.

\begin{Definition} \label{D:abelianideals}
Let $I, A \subseteq  \Delta^+$.
\begin{enumerate}
\item[(i)] We say that $I$ is an {\em ideal} of $\Delta^+$ if $\alpha\in\Delta^+$, $\beta\in I$, 
and $\alpha+\beta\in\Delta^+$ implies $\alpha+\beta\in I$.
\item[(ii)] We say that $A$ is an {\em abelian ideal} of $\Delta^+$ if $A$ 
is an ideal of $\Delta^+$ and if $\alpha,\beta\in A$ implies $\alpha+\beta\notin\Delta$.
\end{enumerate}
\end{Definition}

The above arguments show that the map $\i\mapsto\Delta^+_{\i}$ is a bijection 
from the ideals $\i \subseteq  \n$ of $\b$ to the ideals of $\Delta^+$, and the map 
$\a\mapsto\Delta^+_{\a}$ is a bijection from the the abelian ideals $\a \subseteq  \n$ 
of $\b$ to the abelian ideals of $\Delta^+$.

Note that $\i$ being an ideal of $\b$ is equivalent to $\i$ being a $\b$-module 
under the adjoint action.
Thus translating the definition of index 
from \eqref{e:index2} to the $\b$-module $\i^*$ we obtain
\begin{equation*}
\ind(\b,\i^*)=\dim(\i)-\underset{x\in \i}{\max}\,\dim([\b, x]).
\end{equation*}
Since $(\_\,,\_)$ induces a $\g$-module isomorphism $\g\cong\g^*$, we obtain 
$\i^*\cong\g/\i^{\perp}$ as a $\b$-module, where $\i^{\perp}$ is the subspace of $\g$ 
orthogonal to  $\i$ in with respect to $(\_\,,\_)$. Also note that if $\alpha,\beta\in\Delta$, 
then $(\g_{\alpha},\g_{\beta})=0$ unless $\alpha+\beta=0$, 
and $(\h,\g_{\alpha})=0$. It follows that $\n^{\perp}=\b$ and $\b^{\perp}=\n$, so $\n^*\cong\g/\b$ and $\b^*\cong\g/\n$ 
as $\b$-modules. It then also 
follows from~\eqref{e:idecomp} that if $\i \subseteq \n$, then 
$\i^{\perp}=\b\oplus\bigoplus_{\alpha\in\Delta \setminus \Delta^+_{\i}}\g_{-\alpha}$, so
as a vector space we have $\i^*\cong \bigoplus_{\alpha\in\Delta^+_{\i}}\g_{-\alpha}$. 
Defining $[\b,x]_{\i}\coloneqq ([\b,x]+\i^{\perp})/\i^{\perp} \subseteq \g/\i^{\perp}$ for all $x\in\g$, we then similarly find
\begin{equation*}
\ind(\b,\i)=\dim(\i)-\underset{x\in \g}{\max}\,\dim([\b, x]_{\i}).
\end{equation*}

\section{Generalized Root Systems} \label{S:GRS}

\subsection{Definitions and Preliminary Results}

In 2024, Dimitrov and Fioresi introduced the theory of generalized root systems (GRSs) in~\cite{DF}. We give an overview of GRSs and use 
{\em loc.\ cit.\ }as our general reference for all below.

\begin{Definition}
Let $V$ be a finite-dimensional Euclidean space with inner product $\langle\_\,,\_\rangle$ and let $\Delta \subseteq  V$ be 
a non-empty finite subset of $V$. Then $(\Delta,V)$ is a {\em generalized root system} if $V=\R \Delta$, and 
if the following holds for all $\alpha,\beta\in \Delta$:
\begin{enumerate}
\item[(GRS1)] If $\langle\alpha,\beta\rangle>0$, then $\alpha-\beta\in \Delta$;\label{enum:GRS>0}
\item[(GRS2)] If $\langle\alpha,\beta\rangle<0$, then $\alpha+\beta\in \Delta$;\label{enum:GRS<0}
\item[(GRS3)] If $\langle\alpha,\beta\rangle=0$, then $\alpha+\beta\in \Delta$ if and only if $\alpha-\beta\in \Delta$.\label{enum:GRS=0}
\end{enumerate}
\end{Definition}

The {\em rank} of $(\Delta,V)$ is defined as the dimension of $V$. We tend to omit the vector space and inner 
product and just write $\Delta$ as the GRS. Note that $\langle\alpha,\alpha\rangle>0$ for all nonzero $\alpha\in\Delta$, 
so $0\in \Delta$. Then $\langle \alpha, 0 \rangle = 0$ and 
$0+\alpha \in \Delta$, so $-\alpha \in \Delta$. We call $\Delta=\{0\}$ the {\em trivial} GRS.

We can view Lie algebra root systems as GRSs, with the minor caveat that these 
root systems do not contain $0$ whereas
GRSs do contain 0 by convention. 
It follows from~\cite[Theorem 4.7]{DF} that the root systems of
the basic classical Lie superalgebras can also be viewed as GRSs, as we explain next. 
Let $R$ be the root system of a basic classical Lie superalgebra $\g$ and assume 
$\g \ne \psl(n|n)$. Then $\Delta$ can be 
viewed as subset of a real form $\h_\R$ of the Cartan subalgebra of $\g_\0$. The proof of \cite[Theorem 4.7]{DF} 
implies that there is a vector space isomorphism $\h_\R \to V$ to an inner product space $V$ such 
that the image of $R$ in
$V$ is a GRS. Note that the inner product $\langle\_\,,\_\rangle$ on $V$ does not 
correspond to the restriction of $(\_\,,\_)$
to $\h_\R$ as the latter is not positive definite in general; as this does not impact on the 
methods and results of this paper we do not consider
it further here. In the case $\g = \psl(n|n)$ we need to consider $\Delta$ to be 
in the $2n-1$ dimensional subspace of the space
diagonal matrices in $\gl(n|n)$ spanned by the elements of $\Delta$.

We remark that GRSs were classified in 2024 by Cuntz and M{\"u}hlherr in~\cite{CMu}; they found 
that each GRS of rank at least 2 is the quotient of a Lie algebra root system, although we will not 
employ this result. This extended the classification of rank 2 GRSs by Dimitrov and 
Fioresi from~\cite[Section 5]{DF}.

Much of the theory of root systems extends naturally to GRSs. This allows for the 
definition of irreducible GRSs, and then it can be shown 
every GRS can be uniquely decomposed as a direct sum of irreducible subsystems, see~\cite[Corollary 1.17]{DF}. 
Such subsystems must necessarily be mutually orthogonal. Furthermore, every GRS admits a base 
by~\cite[Proposition 1.6]{DF}, where a base $\Pi$ of $\Delta$ is a subset such that every $\alpha\in \Delta$ 
can be written uniquely in the form $\alpha=\sum_{\beta\in \Pi}k_{\beta}\beta$, 
with either $k_{\beta}\in\Z_{\geq 0}$ for all $\beta\in \Pi$, or $k_{\beta}\in\Z_{\leq 0}$ for all $\beta\in \Pi$.
We may then define the set of positive roots $\Delta^+$ of $\Delta$ with respect to a base, which 
we call a {\em positive system} of $\Delta$. As is shown in \cite[Propositions 1.6 and 2.4]{DF}, a 
positive system is equivalently a subset $\Delta^+$ of $\Delta$ such that $0\in\Delta^+$, 
exactly one of $\alpha$ and $-\alpha$ is an element of $\Delta^+$ for 
all $\alpha\in\Delta\setminus\{0\}$, and if $\alpha,\beta\in\Delta^+$ and 
$\alpha+\beta\in\Delta$, then $\alpha+\beta\in\Delta^+$.

We now list some properties of GRSs that we require later in this paper.
\begin{enumerate}
\item[(P1)] For $\alpha,\beta\in \Delta$, the $\alpha$-string,  
$\{\beta+k\alpha\in \Delta \mid k\in\Z\}$  through $\beta$ is continuous by \cite[Corollary 1.13]{DF}.
\item[(P2)] Each positive system $\Delta^+$ of $\Delta$ induces a partial ordering $\preccurlyeq$ on 
$\Delta$, where $\alpha\preccurlyeq \beta$ if $\beta-\alpha$ can be written as a non-negative 
integral linear combination of elements in $\Delta^+$.
\item[(P3)]  It is a consequence of~\cite[Proposition 1.10]{DF} that if $\Pi$ is a base of $\Delta$ 
with the associated positive system $\Delta^+$, and if $\beta\in\Delta^+\setminus\Pi$, 
then $\beta-\alpha\in\Delta^+$ for some $\alpha\in\Pi$.
\item[(P4)]  If $\Delta$ is irreducible, then $\Delta^+$ has a unique highest root by~\cite[Proposition 1.14]{DF}.
\end{enumerate}

\subsection{(Abelian) Ideals of GRSs}
We define (abelian) ideals of positive systems of GRSs as we did in Definition~\ref{D:abelianideals}. 

There are many results for Lie algebra root systems that are solely reliant on its inherent GRS structure. 
An example of which is~\cite[Lemma 1]{Su}, due to Kostant, which states that the ideals of positive systems 
are uniquely defined by the sum of their roots; the proof only requires a GRS compatible inner product so immediately 
extends to GRSs in general. We state this extended result below.

\begin{Lemma}[Kostant]
Let $\Delta^+$ be a positive system of a GRS. Let $I,J \subseteq  \Delta^+$ be ideals of $\Delta^+$ such that 
$\sum_{\alpha\in I}\alpha=\sum_{\alpha\in J}\alpha$. Then $I=J$.
\end{Lemma}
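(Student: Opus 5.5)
We follow Kostant's original argument, which uses only the GRS axioms together with positivity of the inner product. Set $\rho_I\coloneqq\sum_{\alpha\in I}\alpha$ and define $\rho_J$ similarly. Then put $I'\coloneqq I\setminus J$ and $J'\coloneqq J\setminus I$. Because $\rho_I=\rho_J$, we have $\sum_{\alpha\in I'}\alpha=\sum_{\beta\in J'}\beta$; call this common vector $v$. Suppose $I\ne J$, so that $I'\cup J'\neq\varnothing$. We aim for a contradiction by showing $\langle v,v\rangle\le 0$ while $v\ne 0$. (We ignore the root $0$, which lies in every positive system; it contributes nothing to the sums, and we may assume ideals are taken inside $\Delta^+\setminus\{0\}$ or that $0$ lies in both.)

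\textbf{Step 1: the key pairing inequality.} For $\alpha\in I'$ and $\beta\in J'$ we claim $\langle\alpha,\beta\rangle\le 0$. Suppose instead $\langle\alpha,\beta\rangle>0$. By (GRS1), $\alpha-\beta\in\Delta$. Also $\alpha\neq\beta$, since $\alpha\notin J$ while $\beta\in J$, so $\alpha-\beta\ne0$. Hence exactly one of $\alpha-\beta$ and $\beta-\alpha$ lies in $\Delta^+$.
\begin{itemize}
\item If $\alpha-\beta\in\Delta^+$, then $\alpha=(\alpha-\beta)+\beta$ with $\beta\in J$. Since $J$ is an ideal, $\alpha\in J$, a contradiction.
\item If $\beta-\alpha\in\Delta^+$, the same argument using the ideal property of $I$ gives $\beta\in I$, again a contradiction.
\end{itemize}
So the claim holds.

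\textbf{Step 2: conclude.} Expanding, we get $\langle v,v\rangle=\sum_{\alpha\in I',\beta\in J'}\langle\alpha,\beta\rangle\le 0$. Positive definiteness then forces $v=0$. It remains to show $v\neq0$ whenever $I'\cup J'\neq\varnothing$. Suppose, say, $I'\ne\varnothing$. Choose a linear functional $f$ on $V$ that is strictly positive on $\Delta^+\setminus\{0\}$; one exists because $\Delta^+$ comes from a base $\Pi$, so we can take $f$ positive on $\Pi$, using \cite[Proposition 1.6]{DF}. Then $f(v)=\sum_{\alpha\in I'}f(\alpha)>0$, so $v\ne 0$, a contradiction. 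The case $J'\neq\varnothing$ is symmetric. Hence $I=J$.

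The only point needing care is Step 1. There we must use that $\alpha-\beta$ is a nonzero element of $\Delta$, and hence has a definite sign with respect to $\Delta^+$, which is exactly what (GRS1) and the positive-system dichotomy provide. Everything else is formal, so we expect no genuine obstacle.
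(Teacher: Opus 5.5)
Your proof is correct and is exactly Kostant's argument from \cite[Lemma 1]{Su}; the paper does not write this out, it only asserts that the argument carries over verbatim to GRSs. The argument is: $\langle\alpha,\beta\rangle\le 0$ for $\alpha\in I\setminus J$, $\beta\in J\setminus I$ by (GRS1) and the ideal property, then positive definiteness plus a functional positive on $\Delta^+\setminus\{0\}$ forces $I=J$. Your caveat about the root $0$ is necessary, not cosmetic: an ideal containing $0$ must be all of $\Delta^+$, and both $\Delta^+$ and $\Delta^+\setminus\{0\}$ are ideals with the same sum, so the lemma only holds for ideals avoiding $0$.
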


Restricting to Lie superalgebras then gives the following Corollary

\begin{Corollary}
Let $\Delta^+$ be a set of positive roots of a basic classical Lie superalgebra root system. 
Let $I,J \subseteq \Delta^+$ be ideals of $\Delta^+$ such that $\sum_{\alpha\in I}\alpha=\sum_{\alpha\in J}\alpha$. Then $I=J$.
\end{Corollary}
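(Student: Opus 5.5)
The Corollary follows from the preceding Lemma (Kostant) once we know the positive roots of a basic classical Lie superalgebra form a positive system of a GRS, with ideals matching.

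First I embed the roots. For $\g\ne\psl(n|n)$, \cite[Theorem 4.7]{DF}, as explained in the discussion above, gives a linear isomorphism $\h_\R\to V$ onto an inner product space under which the image $R'$ of the root system $R$ is a GRS. Set $\Delta = R'\cup\{0\}$, which is the GRS convention. For $\psl(n|n)$ I use the $(2n-1)$-dimensional span instead. For $\gl(m|n)$ I use the roots of $\sl(m|n)$, or $\psl(n|n)$ when $m=n$: the roots of $\gl(m|n)$ live in the span of those roots, so the same embedding applies.

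Next I check positivity. Let $\Delta^+_V$ be the image of $\Delta^+$ together with $0$. By the characterisation recalled from \cite[Propositions 1.6 and 2.4]{DF}, $\Delta^+_V$ is a positive system of $\Delta$. Indeed, exactly one of $\pm\alpha$ lies in $\Delta^+$ for each nonzero $\alpha$. Closure under addition within $\Delta$ is linear, so it transfers directly, and the cases involving $0$ are trivial.

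Then I check that ideals correspond. Let $I\subseteq\Delta^+$ be an ideal in the sense of Definition~\ref{D:abelianideals}, and let $I_V$ be its image in $V$. Take $\alpha\in\Delta^+_V$ and $\beta\in I_V$ with $\alpha+\beta\in\Delta^+_V$.
\begin{itemize}
\item If $\alpha\neq0$, then $\alpha+\beta\neq0$ because both are positive. The Lie superalgebra ideal condition then gives $\alpha+\beta\in I_V$.
\item If $\alpha=0$, then $\alpha+\beta=\beta\in I_V$.
\end{itemize}
So $I_V$ is an ideal of $\Delta^+_V$. Zero is not in $I_V$, so there is no ambiguity about whether ideals contain $0$.

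Finally I apply the Lemma. Linearity gives $\sum_{I_V}=\sum_{J_V}$, so $I_V=J_V$ by the Lemma. Injectivity of the embedding then gives $I=J$.

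The only delicate point is the psl/gl bookkeeping in the embedding. Everything else is direct transfer along a linear isomorphism.
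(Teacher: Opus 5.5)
Your proposal is correct and follows the paper's route: the paper obtains the Corollary in one line by viewing the root system as a GRS via \cite[Theorem~4.7]{DF} and applying the GRS version of Kostant's lemma, and you have made explicit how positive systems and ideals transfer along the linear embedding. Your observation that $0\notin I_V, J_V$ is a worthwhile detail, because under the GRS convention $0\in\Delta^+$ the sets $\Delta^+$ and $\Delta^+\setminus\{0\}$ are distinct ideals with the same sum, so the lemma as literally stated needs ideals avoiding $0$.
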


This will not be employed in the rest of this paper, and is used only to demonstrate the method of 
proof for our main results: prove a result for GRSs, which implies a result for Lie superalgebra 
root systems that uncovers Lie superalgebra structures.

\subsection{Strongly Orthogonal Roots and The Kostant Cascade}  \label{ss:sorkc}

We first define strongly orthogonal roots of GRSs, which extends the definition for Lie algebra root systems in~\cite[Definition 1]{Pa3}.

\begin{Definition}
Let $\Delta$ be a GRS. We say that $\alpha,\beta\in \Delta$ are {\em strongly orthogonal} if 
$\alpha \pm \beta \notin \Delta\setminus\{0\}$. A subset $S \subseteq  \Delta$ is a {\em strongly orthogonal subset} 
if the elements of $S$ are pairwise strongly orthogonal.
\end{Definition}

Note that if $\alpha,\beta\in \Delta$ are strongly orthogonal, then $\langle\alpha,\beta\rangle=0$ by (GRS3).

For the definition of the Kostant cascade we need the set of maximal roots in $\Delta^+$ which is
\begin{equation*}
\Max(\Delta^+)\coloneqq\{\alpha\in \Delta^+:\alpha\nprec\beta\text{ for all }\beta\in \Delta^+\},
\end{equation*}
We note that $\Max(\Delta^+)$ is a strongly orthogonal subset of $\Delta$, because, for $\alpha,\beta\in\Max(\Delta^+)$, we have 
$\alpha+\beta \in \Delta^+$ implies $\alpha,\beta \not\in \Max(\Delta^+)$ and $\alpha-\beta \in \Delta^+$ implies $\beta \not\in \Max(\Delta^+)$.
Note also if $\Delta$ is irreducible, then $\Max(\Delta^+)$ is a singleton set that contains the unique highest root $\theta$ of $\Delta^+$ by (P4).

The definition of the Kostant cascade as recalled in the introduction can be extended to GRSs, with  $\Delta_i$ as defined in \eqref{e:Deltai} 
and $\cK(\Delta^+)$ as defined in \eqref{e:KC}. The argument outlined in the introduction that shows each $\Delta_i$ is a root system, works
equally well in the GRS regime. Note the algorithm terminates at $\Delta^+_{N+1}=\{0\}$ instead of $\varnothing$ for GRSs.

We illustrate this construction for the Lie algebra root system of type $\mathrm A_4$. 

\begin{Example} \label{E:KCA4}
Let $\Delta=\{\epsilon_i-\epsilon_j:1\leq i, j\leq 5\}$, and use the notation $ij\coloneqq\epsilon_i-\epsilon_j$. 
Let $\Delta^+=\Delta^+_0=\{ij:1\leq i,j\leq 5\}$. We construct $\cK(\Delta^+)$ diagrammatically.

\bigskip

\begin{center}
\includegraphics[width=0.9\linewidth]{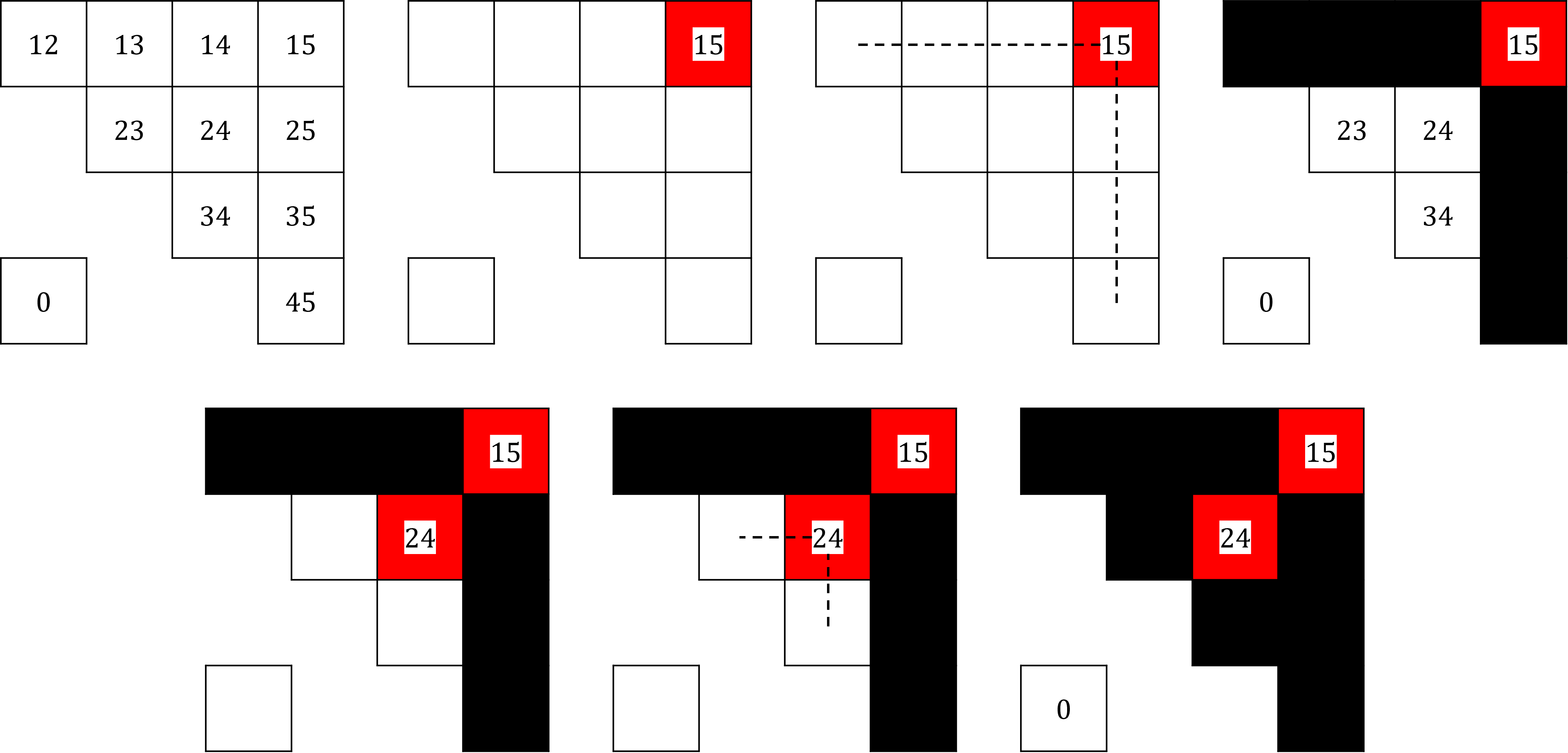}
\end{center}

The construction proceeds by finding $\Max(\Delta^+_0)=\{15\}$, and hence $\Delta_1^+=\{23,24,34,0\}$. 
Then $\Max(\Delta^+_1)=\{24\}$, so we terminate at $N=1$ with $\cK(\Delta^+)=\{15,24\}$.

One may verify the following in this case: $\cK(\Delta^+)$ is a strongly orthogonal subset 
of $\Delta$; every $\alpha\in\Delta^+$ can uniquely be written in the form $\alpha=\theta-\delta$ for 
some $\theta\in\cK(\Delta^+)$ and $\delta\in\Delta^+$; if $24-\alpha\in\Delta^+$ for some 
$\alpha\in\Delta^+$, then $\alpha\in\Delta_1^+$. 
\end{Example}

We formalise the observations of this example in the next theorem. We first give some notation that will be 
used: let $V_i \coloneqq \R\Delta_i$; for each $\theta\in\cK(\Delta^+)$, let $i_{\theta}\in\Z_{\geq 0}$ be such 
that $\theta\in\Max(\Delta^+_{i_{\theta}})$; let 
$\Delta_{i}^{(-)}{}_{\theta}\coloneqq\{\gamma\in \Delta_{i}^+\setminus\{\theta\}:\theta-\gamma\in \Delta_{i}^+\}$ 
for all $\theta\in\cK(\Delta^+)$ and $i=0,\ldots,N$; let $\Delta^{(-)}{}_{\theta}\coloneqq\Delta_{0}^{(-)}{}_{\theta}$.

\begin{Theorem} \label{T:KCstrongorth}
Let $(\Delta,V)$ be a non-trivial irreducible GRS and let $\Delta^+$ be a positive system of $\Delta$. Then the following holds:
\begin{enumerate}
\item[(i)] $\cK(\Delta^+)$ is a strongly orthogonal subset of $\Delta$;
\item[(ii)] $\vert\cK(\Delta^+)\vert\leq\dim(V)$;
\item[(iii)] $\Delta^{(-)}{}_{\theta}=\Delta^{(-)}_{i_{\theta}}{}_{\theta}$ for all $\theta\in\cK(\Delta^+)$;
\item[(iv)] $\Delta^+=\bigsqcup_{\theta\in \cK(\Delta^+)}(\theta-\Delta^{(-)}{}_{\theta})$;\l
\item[(v)] $\Delta^{(-)}{}_{\theta}\cap\Delta^{(-)}{}_{\phi}=\{0\}$ for all distinct $\theta,\phi\in\cK(\Delta^+)$.
\end{enumerate}
\end{Theorem}

\begin{proof}
Let $\cK=\cK(\Delta^+)$.

\noindent
(i) This follows as $\Max(\Delta^+_i)$ is a strongly orthogonal subset for 
$i=0,\ldots,N$ as shown above, and from the definitions we have that all 
elements of $\Delta^+_{i+1}$ are strongly orthogonal to all elements of $\Max(\Delta_j)$ for $j \le i$. 

\noindent
(ii) By (i)we have that $\cK$ is linearly independent, and hence 
$|\cK| \leq \dim(V)$.

\noindent
(iii) We have that $\Delta^{(-)}_{i_{\theta}}{}_{\theta} \subseteq \Delta^{(-)}{}_{\theta}$ 
and $\Delta^{(-)}_j{}_{\theta}  = \varnothing$ for $j > i_\theta$.
So suppose 
that $\theta-\delta\in\Delta^+$ but $\delta\in\Delta^+ \setminus \Delta^+_{i_{\theta}}$. It follows 
that $\phi-\delta\in\Delta^+_{i_{\phi}}$ for some $\phi\in\cK$ with $i_{\phi}<i_{\theta}$, and hence 
$\delta\in\Delta^+_{i_{\phi}}$. We have $\langle\phi,\delta\rangle > 0$, otherwise 
$\phi \not\in\Max(\Delta^+_{i_{\phi}})$. But then $\langle\theta-\delta,\phi\rangle=-\langle\delta,\phi\rangle<0$, 
so $\phi+(\theta-\delta)\in\Delta^+$, which contradicts $\phi\in\Max(\Delta^+_{i_{\phi}})$. Hence, no 
such $\delta$ exists so $\Delta^{(-)}{}_{\theta} = \Delta^{(-)}_{i_{\theta}}{}_{\theta}$.

\noindent
(iv) This follows from (iii) and the definition of $\Delta^+_i$ and $\cK$.

\noindent
(v) Assume for a contradiction that there exists $0\ne\gamma\in\Delta^{(-)}_{i_{\theta}}{}_{\theta}\cap\Delta^{(-)}_{i_{\phi}}{}_{\phi}$ 
for some distinct $\theta,\phi\in\cK$ with
$i_{\phi} \le i_{\theta}$. 
As $\gamma = \phi-(\phi-\gamma)\in(\phi-\Delta^{(-)}_{i_{\phi}}{}_{\phi})$, we get that $\gamma\notin\Delta^+_j$ for $j >i_\phi$, 
so we have $i_{\phi} = i_{\theta}$.
Then since $\theta$ and $\phi$ live in distinct orthogonal irreducible components of $\Delta_{i_\theta}$ by (P4).
But this implies $\gamma$ is in both of these irreducible components which is the required contradiction.
\end{proof}

It is a consequence of Theorem~\ref{T:KCstrongorth}(iv) and (P4) that every 
$\alpha\in\Delta^+$ can uniquely be written in the form $\alpha=\theta-\delta$ for some 
$\theta\in\cK(\Delta^+)$ and $\delta\in\Delta^+$. 

Of course $|\cK(\Delta^+)|$ is independent of the choice of positive roots $\Delta^+$ of a Lie algebra root system. 
The below example shows that $|\cK(\Delta^+)|$ depends on the positive system of a GRS $\Delta$ in general.

\begin{Example}
Consider the basic classical Lie superalgebra $\osp(2|6)$ with Cartan subalgebra $\h$. Following~\cite[\S 1.2.5]{CW}, there exists a base $\{\epsilon,\delta_1,\delta_2,\delta_3\}$ of $\h^*$ such that
\begin{equation*}
\Delta=\{\pm\delta_i\pm\delta_j:1\leq i\ne j\leq 3\}\cup\{\pm\delta_i\pm\epsilon:1\leq i\leq 3\}.
\end{equation*}
The below table describes the roots of the Kostant cascade of two positive systems $\Delta^+$ and $\Omega^+$ 
of $\Delta$. We partition $\Delta^+$ into the subsets $\theta-\Delta^{(-)}{}_{\theta}$ for $\theta\in\cK(\Delta^+)$, 
and $\Omega^+$ into the subsets $\theta-\Omega^{(-)}{}_{\theta}$ for $\theta\in\cK(\Omega^+)$. We 
find that $\cK(\Delta^+)=\{\delta_1+\epsilon,2\delta_2\}$ and $\cK(\Omega^+)=\{2\delta_1,\delta_2+\epsilon,2\delta_3\}$.

\begin{table}[!ht]
\centering
\begin{tabular}{c|cccccc|}
\cline{2-7}
  $(\delta_1+\epsilon)-\Delta^{(-)}_{\delta_1+\epsilon}$ & ${\epsilon-\delta_1}$ & ${\epsilon-\delta_2}$ & ${\epsilon-\delta_3}$ & ${\delta_3+\epsilon}$ & ${\delta_2+\epsilon}$ & $\boxed{\delta_1+\epsilon}$ \rule{0pt}{2.6ex}\\
&& ${\delta_1-\delta_2}$ & ${\delta_1-\delta_3}$ & ${\delta_3+\delta_1}$ & ${\delta_2+\delta_1}$ &${2\delta_1}$\\
\cline{2-7}
  $(2\delta_2)-\Delta^{(-)}_{2\delta_2}$&&& ${\delta_2-\delta_3}$& ${\delta_3+\delta_2}$ & $\boxed{2\delta_2}$ &\\
&&&& ${2\delta_3}$ &&\rule[-0.9ex]{0pt}{0pt}\\
\cline{2-7}
\end{tabular}
\\
\centering
\begin{tabular}{c|ccccccc|}
\cline{2-8}
$(2\delta_1)-\Omega^{(-)}_{2\delta_1}$ & ${\delta_1-\epsilon}$ & ${\delta_1-\delta_2}$ & ${\delta_1-\delta_3}$ & ${\delta_3+\delta_1}$ & ${\delta_2+\delta_1}$ & ${\delta_1+\epsilon}$ & $\boxed{2\delta_1}$\rule{0pt}{2.6ex}\\
\cline{2-8}
 $(\delta_2+\epsilon)-\Omega^{(-)}_{\delta_2+\epsilon}$ && ${\epsilon-\delta_2}$ & ${\epsilon-\delta_3}$ & ${\delta_3+\epsilon}$ & $\boxed{\delta_2+\epsilon}$ & &  \\
&&& ${\delta_2-\delta_3}$& ${\delta_3+\delta_2}$ & ${2\delta_2}$ & &\\
\cline{2-8}
$(2\delta_3)-\Omega^{(-)}_{2\delta_3}$ &&&& $\boxed{2\delta_3}$ & & &  \\
\cline{2-8}
\end{tabular}
\end{table}

\end{Example}

\subsection{Strongly Orthogonal Subsets of Abelian Ideals} \label{ss:abelianorthog}
The Kostant cascade is constructed by subtracting from maximal roots. It stands to reason that we 
may find similar results when instead adding to minimal roots. We first extend some of Panyushev's 
results from~\cite{Pa3} to GRSs. We do not use Lemma~\ref{L:addstrongorthog}(ii) 
in this paper but include it for completeness.

\begin{Lemma}\label{L:addstrongorthog}
Let $\Delta$ be a GRS. Let $\Delta^+$ be a positive system of $\Delta$, let $A \subseteq \Delta^+$ 
be an abelian ideal of $\Delta^+$, and let $\alpha,\beta\in A$ be strongly orthogonal roots. Then the following holds:
\begin{enumerate}
\item[(i)] if $\alpha+\gamma\in \Delta^+$ for some $\gamma\in \Delta^+\setminus\{0\}$, then $\beta+\gamma\notin \Delta^+$;\
\item[(ii)] if $\alpha-\gamma\in A$ for some $\gamma\in \Delta^+\setminus\{0\}$, then $\beta-\gamma\notin A$.
\end{enumerate}
\end{Lemma}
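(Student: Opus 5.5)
Both parts can be proved by contradiction with the same inner-product argument. We only use the axioms (GRS1)--(GRS3), the abelian and ideal properties of $A$, and the fact noted after the definition of strong orthogonality that $\langle\alpha,\beta\rangle=0$. Throughout, $\alpha$ and $\beta$ are taken to be distinct. If $\alpha=\beta$ the statement fails trivially, so distinctness is implicit in the notion of a strongly orthogonal pair.

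The basic observation is this. If $x,y\in A$, then $x+y\notin\Delta$ because $A$ is abelian, so (GRS2) forces $\langle x,y\rangle\ge 0$. If in addition $x-y\notin\Delta\setminus\{0\}$, then (GRS1) forces $\langle x,y\rangle\le 0$, hence $\langle x,y\rangle=0$.

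For (i), suppose both $\alpha+\gamma$ and $\beta+\gamma$ lie in $\Delta^+$. Since $A$ is an ideal, both lie in $A$, and the plan has three steps.
\begin{enumerate}
\item[(a)] Apply the basic observation to the pair $\alpha+\gamma,\beta\in A$. This gives $\langle\alpha+\gamma,\beta\rangle\ge0$, and since $\langle\alpha,\beta\rangle=0$ we get $\langle\gamma,\beta\rangle\ge0$. Symmetrically, $\langle\gamma,\alpha\rangle\ge0$.
\item[(b)] Apply it to the pair $\alpha+\gamma,\beta+\gamma\in A$. Their difference is $\alpha-\beta$, which is not in $\Delta\setminus\{0\}$ by strong orthogonality. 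Hence $\langle\alpha+\gamma,\beta+\gamma\rangle=0$.
\item[(c)] Expand the inner product in (b) to obtain
\[0=\langle\alpha,\gamma\rangle+\langle\beta,\gamma\rangle+\langle\gamma,\gamma\rangle.\]
The first two terms are nonnegative by (a), and $\langle\gamma,\gamma\rangle>0$ since $\gamma\ne0$. This is a contradiction.
\end{enumerate}

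For (ii), suppose $\alpha'=\alpha-\gamma$ and $\beta'=\beta-\gamma$ both lie in $A$; the plan again has three steps.
\begin{enumerate}
\item[(a)] The pair $\alpha',\beta\in A$ gives $\langle\alpha-\gamma,\beta\rangle\ge0$, hence $\langle\gamma,\beta\rangle\le0$. Symmetrically, $\langle\gamma,\alpha\rangle\le0$.
\item[(b)] The pair $\alpha',\beta'$ has difference $\alpha-\beta\notin\Delta\setminus\{0\}$, so $\langle\alpha',\beta'\rangle=0$.
\item[(c)] Expanding gives
\[0=-\langle\alpha,\gamma\rangle-\langle\beta,\gamma\rangle+\langle\gamma,\gamma\rangle>0,\]
which is again a contradiction.
\end{enumerate}

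The only delicate point is step (b) in each part: we need $\alpha-\beta$ to be neither a nonzero root, so that (GRS1) applies, nor equal to $0$. The first is exactly strong orthogonality, and the second is the distinctness assumption. Once both inequalities for the pair hold, the contradiction is a one-line expansion using positivity of $\langle\gamma,\gamma\rangle$.
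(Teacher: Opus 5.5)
Your proof is correct, and part (i) is essentially the paper's argument. The paper splits on whether $\langle\alpha,\gamma\rangle$ or $\langle\beta,\gamma\rangle$ is negative, which it rules out via (GRS2) and abelianness exactly as in your step (a). Otherwise it gets $\langle\alpha+\gamma,\beta+\gamma\rangle>0$ and hence $\alpha-\beta\in\Delta$ by (GRS1), as in your steps (b)--(c).

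For (ii), the paper does not redo the computation. It reduces to (i) by noting that $\alpha-\gamma,\beta-\gamma$ is again a strongly orthogonal pair in $A$. Your sign-flipped direct computation works equally well and does not even invoke the ideal property. Your explicit assumption $\alpha\neq\beta$ is genuinely needed, and the paper's proof uses it tacitly too.
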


\begin{proof}
(i) Suppose for a contradiction $\alpha+\gamma\in \Delta^+$ and $\beta+\gamma\in \Delta^+$ for some $\gamma\in \Delta^+\setminus\{0\}$.

Suppose either $(\alpha,\gamma)<0$ or $(\beta,\gamma)<0$. Without loss of generality l
et $(\alpha,\gamma)<0$. Then $(\beta+\gamma,\alpha)<0$ as $(\alpha,\beta)=0$, 
so $(\beta+\gamma)+\alpha\in \Delta^+$. This contradicts $A$ being an abelian ideal.

Suppose $(\alpha,\gamma),(\beta,\gamma)\geq 0$. Then $(\alpha+\gamma,\beta+\gamma)>0$, 
so $(\alpha+\gamma)-(\beta+\gamma)=\alpha-\beta\in R$. This contradicts $\alpha$ and $\beta$ being strongly orthogonal.

Both cases lead to a contradiction, and hence $\beta+\gamma\notin \Delta^+$.

(ii) Let $\alpha-\gamma\in A$ for some $\gamma\in \Delta^+\setminus\{0\}$ and assume for a contradiction that 
$\beta-\gamma\in A$. We claim that $\alpha-\gamma$ and $\beta-\gamma$ are strongly orthogonal: if 
$(\alpha-\gamma)+(\beta-\gamma)\in \Delta^+$, then this contradicts $A$ being an abelian ideal; if 
$(\alpha-\gamma)-(\beta-\gamma)=\alpha-\beta\in R$, then this contradicts $\alpha$ and $\beta$ 
being strongly orthogonal. The result then follows from (i).
\end{proof}
Let $S \subseteq \Delta^+$. Similarly to $\Max(\Delta^+)$, define 
\begin{equation*}
\Min(S)\coloneqq\{\alpha\in S:\alpha\nsucc\beta\text{ for all }\beta\in S\}.
\end{equation*}
Note $\Min(\Delta^+)$ is the base of $\Delta$ associated with $\Delta^+$, which is not a strongly orthogonal subset in 
general. The below proposition shows that $\Min(A)$ is a strongly orthogonal subset when $A$ is a subset of an abelian ideal of $\Delta^+$.

\begin{Proposition} \label{P:MinStrong}
Let $\Delta$ be a GRS. Let $\Delta^+$ be a positive system $\Delta$, let $A \subseteq \Delta^+$ be an 
abelian ideal of $\Delta^+$, and let $S \subseteq  A$. Then $\Min(S)$ is a strongly orthogonal subset of $\Delta$.
\end{Proposition}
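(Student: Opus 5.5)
Take distinct $\alpha,\beta\in\Min(S)$; we must show $\alpha+\beta\notin\Delta\setminus\{0\}$ and $\alpha-\beta\notin\Delta\setminus\{0\}$. The sum is quick. Both roots lie in $\Delta^+\setminus\{0\}$, so $\alpha+\beta\neq 0$. If $\alpha+\beta\in\Delta$, then $\alpha+\beta\in\Delta^+$ because positive systems are closed under sums. This contradicts $A$ being an abelian ideal, since $\alpha,\beta\in A$.

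For the difference, suppose $\gamma\coloneqq\alpha-\beta\in\Delta\setminus\{0\}$. Exactly one of $\gamma$ and $-\gamma$ lies in $\Delta^+$, and the roles of $\alpha$ and $\beta$ are symmetric, so we may assume $\gamma\in\Delta^+\setminus\{0\}$. Then $\alpha=\beta+\gamma$ with $\gamma$ a nonzero non-negative integral combination of elements of $\Delta^+$, which by (P2) gives $\beta\prec\alpha$. Since $\beta\in S$, this contradicts $\alpha\in\Min(S)$.

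Hence $\alpha$ and $\beta$ are strongly orthogonal, and $\Min(S)$ is a strongly orthogonal subset. The case $\alpha=\beta$ needs no argument, since strong orthogonality is only required for distinct pairs.

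The one step to check carefully is that $\prec$ is strict, i.e.\ that $\alpha\neq\beta$ and $\beta\preccurlyeq\alpha$ give $\beta\prec\alpha$ in the sense used in the definition of $\Min$. This holds because $\gamma\neq 0$. We also need that the antisymmetry of $\preccurlyeq$, which follows from the existence of a base (P3), is consistent with how $\Min$ is defined. No GRS axioms beyond closure of $\Delta^+$ under sums are needed. The abelian ideal hypothesis enters only to exclude the sum, and the ideal property itself is not used.
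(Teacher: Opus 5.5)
Your proof is correct and is essentially the paper's own argument: the abelian condition rules out $\alpha+\beta\in\Delta$, and choosing the sign so that $\alpha-\beta\in\Delta^+$ gives $\beta\prec\alpha$, contradicting minimality in $S$. The only difference is cosmetic: the paper first uses the ideal property to get $\alpha+\beta\in A$, and, as you observe, that step is not actually needed.
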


\begin{proof}
Assume for a contradiction that there exists distinct $\alpha,\beta\in \Min(S)$ that are not 
strongly orthogonal, so $\alpha+\beta\in \Delta$ or $\alpha-\beta\in \Delta$: if $\alpha+\beta\in \Delta$, 
then $\alpha+\beta\in A$, which contradicts $A$ being abelian; if $\alpha-\beta\in \Delta$, then 
without loss of generality $\alpha-\beta\in \Delta^+$, so $\beta\prec\alpha$, which contradicts 
$\alpha\in\Min(S)$. Both cases lead to a contradiction, so $\Min(S)$ is a strongly orthogonal subset of $\Delta$. 
\end{proof}

We now construct a comparable process to the Kostant cascade for GRSs where we add instead 
of subtract roots. We then provide an example of this.
\begin{Definition}
Let $\Delta$ be a GRS, let $\Delta^+$ be a positive system of $\Delta$, and let $A \subseteq \Delta^+$ be a 
non-empty abelian ideal of $\Delta^+$. Let $\Delta^{(+)}_{\alpha}\coloneqq\{\gamma\in \Delta^+:\alpha+\gamma\in \Delta^+\}$ 
for all $\alpha\in\Delta^+$. Let
\begin{align*}
A_0=A,&&\text{and }&&A_{k+1}=A_k\setminus\bigcup_{\alpha\in \Min(A_k)}(\alpha+\Delta^{(+)}_{\alpha})
\end{align*}
for all $k\in\Z_{\geq 0}$. Let $N\in\Z_{\geq 0}$ be such that $A_{N+1}=\varnothing \ne A_N$. Then define
\begin{equation*}
\mathcal{A}(A)\coloneqq\bigsqcup_{k=0}^{N}\Min(A_k).
\end{equation*}
\end{Definition}

\begin{Example}
We continue with $\Delta^+$ from Example~\ref{E:KCA4}. Consider the abelian 
ideal $A=A_0=\{34,24,14,35,25,15\}$ of $\Delta^+$. We construct $\mathcal{A}(A)$ diagrammatically.

\bigskip

\begin{center}
\includegraphics[width=0.975\linewidth]{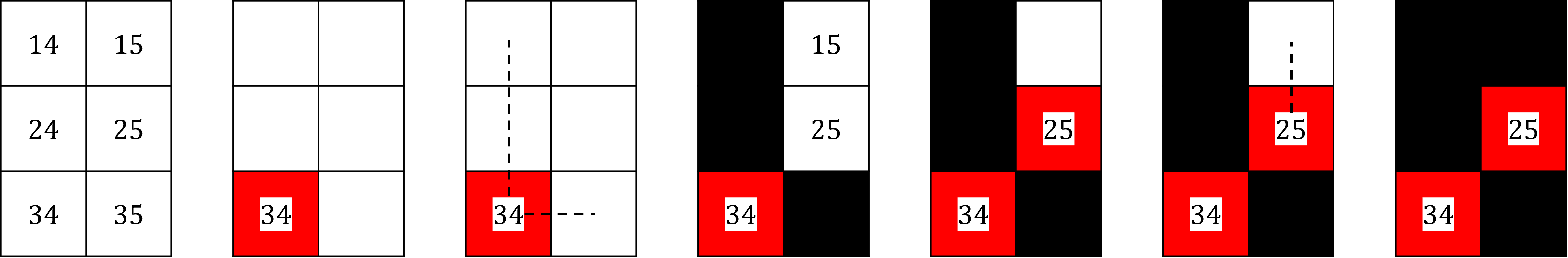}
\end{center}

We find Min$(A_0)=\{34\}$ and $34+\Delta^{(+)}_{34}=\{34,24,14,35\}$, so $A_1=\{25,15\}$. Thus 
$\text{Min}(A_1)=\{25\}$ and $25+\Delta^{(+)}_{25}=\{25,15\}$, so we terminate at $N=1$ with $\mathcal{A}(A)=\{34,25\}$.
\end{Example}

We now prove a similar result to Theorem~\ref{T:KCstrongorth} for abelian ideals.

\begin{Theorem}\label{T:Astrongorth}
Let $\Delta$ be a GRS. Let $\Delta^+$ be a positive system of $\Delta$ and let $A \subseteq \Delta^+$ 
be a non-empty abelian ideal of $\Delta^+$. Then the following holds:
\begin{enumerate}
\item[(i)] $\mathcal{A}(A)$ is a strongly orthogonal subset of $\Delta$;
\item[(ii)] $A=\bigcup_{\alpha\in\mathcal{A}(A)}(\alpha+\Delta^{(+)}_{\alpha})$;\
\item[(iii)] $\Delta^{(+)}_{\alpha}\cap\Delta^{(+)}_{\beta}=\{0\}$ for all $\alpha,\beta\in\mathcal{A}(A)$.
\end{enumerate}
\end{Theorem}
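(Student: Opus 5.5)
Since $A$ is an ideal of $\Delta^+$ and $0\in\Delta^{(+)}_\alpha$, each set $\alpha+\Delta^{(+)}_\alpha$ with $\alpha\in A$ is contained in $A$. I will therefore first check by induction that every $A_k$ is again an ideal of $\Delta^+$. Suppose $\beta\in A_{k+1}$, $\gamma\in\Delta^+$ and $\beta+\gamma\in\Delta^+$. Then $\beta+\gamma\in A_k$. If $\beta+\gamma=\alpha+\delta$ for some $\alpha\in\Min(A_k)$ and $\delta\in\Delta^{(+)}_\alpha$, I want to show that $\beta$ itself already lies in $\alpha+\Delta^{(+)}_\alpha$, which would be a contradiction. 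Here I would argue via comparison in $\preccurlyeq$. Write $\beta=\alpha'+\dots$ with $\alpha'\in\Min(A_{k+1})$ (and hence $\alpha'\succcurlyeq$ some minimal element of $A_k$). This step is delicate, and if it fails I would instead avoid needing the ideal property for the $A_k$.

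\emph{Part (i).} Each $A_k\subseteq A$, so $\Min(A_k)$ is strongly orthogonal by Proposition~\ref{P:MinStrong}. For $\alpha\in\Min(A_j)$ and $\beta\in\Min(A_k)$ with $j<k$, note that $\alpha+\beta\notin\Delta$ because $A$ is abelian. Also $\beta-\alpha\notin\Delta^+\setminus\{0\}$, since otherwise $\beta=\alpha+(\beta-\alpha)\in\alpha+\Delta^{(+)}_\alpha$ would have been removed at step $j$. It remains to exclude $\alpha-\beta\in\Delta^+\setminus\{0\}$. In that case $\beta\prec\alpha$ with $\beta\in A_k\subseteq A_j$, contradicting $\alpha\in\Min(A_j)$. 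Hence $\mathcal A(A)$ is strongly orthogonal.

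\emph{Part (ii).} The inclusion $\supseteq$ holds since $A$ is an ideal. For $\subseteq$, the sets $A_k$ strictly decrease, because $\Min(A_k)\subseteq\alpha+\Delta^{(+)}_\alpha$ via $0$. So every element of $A$ is removed at some step $k$, namely lies in $\alpha+\Delta^{(+)}_\alpha$ for some $\alpha\in\Min(A_k)\subseteq\mathcal A(A)$. This part needs no ideal property of the $A_k$.

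\emph{Part (iii).} This is the main step, and I would read it for distinct $\alpha,\beta$. Suppose $0\neq\gamma\in\Delta^{(+)}_\alpha\cap\Delta^{(+)}_\beta$, so that $\alpha+\gamma$ and $\beta+\gamma$ lie in $\Delta^+$. Since $\alpha,\beta\in A$ are strongly orthogonal by (i), Lemma~\ref{L:addstrongorthog}(i) applies directly and gives $\beta+\gamma\notin\Delta^+$, a contradiction. So (iii) follows immediately from (i) and the lemma, and the preliminary ideal-property check turns out to be unnecessary. The main obstacle is therefore only the cross-level orthogonality in (i), which the removal rule handles as shown.
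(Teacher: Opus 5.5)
Your proof is correct and follows the paper's own argument: (i) uses Proposition~\ref{P:MinStrong} within a level, plus minimality and the removal rule across levels; (ii) follows from the strictly decreasing $A_k$ terminating; and (iii) follows from (i) together with Lemma~\ref{L:addstrongorthog}(i), for distinct $\alpha,\beta$. Your opening attempt to show that each $A_k$ is an ideal is unfinished but, as you note yourself, never used, so you can simply delete it.
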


\begin{proof}
The $A=\Min(A)$ case is trivial so we assume that $A\ne\Min(A)$.

\noindent
(i)Assume for a contradiction that there exist distinct 
$\alpha\in \Min(A_n)$ and $\beta\in \Min(A_m)$ that are not strongly orthogonal, for some $n \le m$. At follows from 
Proposition~\ref{P:MinStrong} that 
 $n \ne m$. Thus $n<m$, and $\beta\in A_n$. There are two cases to check:
if $\alpha-\beta\in \Delta^+$, then $\beta\prec\alpha$, which contradicts $\alpha\in \Min(A_n)$; if $\beta-\alpha\in \Delta^+$, then 
$\beta\in(\alpha+\Delta^{(+)}_{\alpha})$, which contradicts $\beta\in A_m$. Hence, $\mathcal{A}(A)$ is a strongly orthogonal subset 
of $\Delta$.

\noindent
(ii)  This follows directly from the definition of $N$.

\noindent
(iii)This follows from Lemma~\ref{L:addstrongorthog}(i) and (i) or the current theorem.
\end{proof}

\section{Evaluating The Index}\label{s:evalindex}

We are now in a position to prove Theorem~\ref{T:index0}, Corollary~\ref{C:indn}, and Theorem~\ref{T:inda}. We first adopt the following assumptions and notations that will be used for the rest of this paper.
\begin{itemize}
\item Let $\g$ be $\gl(m|n)$ or a basic classical Lie superalgebra excluding $\psl(2|2)$. Let $\b=\h\oplus\n$ 
be a Borel subalgebra of $\g$ for $\h$ a Cartan subalgebra of $\g$.
\item Let $\Delta \subseteq \h^*$ be the GRS associated with the the root system of $\g$, and let $\Delta^+$ be the positive system associated with $\b$.
\item Let $\cK(\b)\coloneqq\cK(\Delta^+)$, and let $\mathcal{A}(\a)\coloneqq\mathcal{A}(\Delta^+_{\a})$ for an abelian ideal $\a \subseteq \n$ of $\b$. 
\item Let $0\ne e_{\alpha}\in\g_{\alpha}$ for all $\alpha\in \Delta\setminus\{0\}$ be root space basis vectors.
We allow ourselves to use the notation $e_{\gamma}$ for $\gamma\in(\h^* \setminus \Delta) \cup \{0\}$ with the 
convention that $e_{\gamma}=0$. 
Let $\lambda_{\alpha,\beta}\in\C$ be such 
that $[e_{\alpha},e_{\beta}]=\lambda_{\alpha,\beta}e_{\alpha+\beta}$ for all $\alpha,\beta\in\Delta \setminus \{0\}$ with 
$\alpha \ne -\beta$. Note $\lambda_{\alpha,\beta}\ne 0$ if $\alpha,\beta\ne 0$ and $\alpha+\beta\in\Delta$. 
Let $e_S \coloneqq \sum_{\alpha \in S} e_{\alpha}$ for $S \subseteq \Delta$.
\item Let $h_{\theta}\coloneqq[e_{\theta},e_{-\theta}]$ for all $\theta\in \cK(\b)$, and let $\h_{\cK}\coloneqq\C\{h_{\theta}:\theta\in\cK(\b)\}$.
\end{itemize}

We first prove Theorem~\ref{T:index0}.

\begin{proof}[Proof of Theorem~\ref{T:index0}]
Recall that $\b^*\cong\g/\n$ as a $\b$-module and that we use the notation $[\b,x]_{\b} \coloneqq ([\b,x]+\n)/\n \subseteq \g/\n$ 
for $x \in \g$. 
Let $\alpha \in \Delta^+ \setminus \cK(\b)$. By Theorem~\ref{T:KCstrongorth}
there is a unique $\theta \in \cK(\b)$ such that $\alpha \in \Delta^{(-)}{}_{\theta}$.
Then we have 
\begin{equation*}
[e_{\theta-\alpha},e_{-\cK(\b)}]+\n=\sum_{\beta\in\cK(\b)}[e_{\theta-\alpha},e_{-\beta}]+\n=\lambda_{\alpha,-\theta} e_{-\alpha}+\n.
\end{equation*}
It follows that $e_{-\alpha}+\n \in [\b,e_{-\cK(\b)}]_{\b}$

Observe that $\{h_{\theta}:\theta\in\cK(\b)\}$ is linearly independent as $\cK(\b)$ is orthogonal. Thus for all $\theta\in\cK(\b)$ there exists $h\in\h_{\cK}$ such that $e_{-\theta}+\n = [h,e_{-\cK(\b)}]+\n. \in [\b,e_{-\cK(\b)}]_{\b}$. 

Since $\cK(\b)$ is a strongly orthogonal subset, we have
$[e_{\theta},e_{-\phi}]=0$ for all distinct $\theta,\phi\in\cK(\b)$. 
Therefore, for $\theta \in \cK$ we have that 
$h_{\theta}+\n = [e_{\theta},e_{-\cK(\b)}]+\n \in [\b,e_{-\cK(\b)}]_{\b}$. 

Combining these observations, we deduce that
\begin{equation}\label{e:-cKspan}
[\b,e_{-\cK(\b)}]_{\b} \supseteq \left.\left(\h_{\cK}\oplus\bigoplus_{\alpha\in\Delta^+}\g_{-\alpha}+\n\right)\right/\n. 
\end{equation}
Furthermore it is a straightforward calculation to see that we have the reverse inclusion and thus equality.
Hence, $\dim([\b,e_{-\cK(\b)}]_{\b})=\dim(\n)+\vert\cK(\b)\vert$. Therefore,
\begin{align*}
\ind(\b)&\leq\dim(\b)-\dim([\b,e_{-\cK(\b)}]_{\b})
\\
&=\dim(\b)-(\dim(\n)+\vert\cK(\b)\vert)=\rank(\g)-\vert\cK(\b)\vert.\qedhere
\end{align*}
\end{proof}

The proof of Theorem~\ref{T:index0} leads to the short proof of Corollary~\ref{C:indn} given below.

\begin{proof}[Proof of Corollary~\ref{C:indn}]
Recall $\n^*\cong\g/\b$ as a $\b$-module and that we use the notation $[\b,x]_{\n} \coloneqq ([\b,x]+\b)/\b \subseteq \g/\b$ 
for $x \in \g$. As $\b\supseteq\n$, it follows from~\eqref{e:-cKspan} that
\begin{equation*}
[\b,e_{-\cK(\b)}]_{\n}=\left.\left(\h_{\cK}\oplus\bigoplus_{\alpha\in\Delta^+}\g_{-\alpha}+\b\right)\right/\b=\left.\left(\bigoplus_{\alpha\in\Delta^+}\g_{-\alpha}+\b\right)\right/\b=\g/\b.
\end{equation*}
Thus $\ind(\b,\n)=0$.
This further implies $\ind(\b,\i)=0$ for all ideals $\i \subseteq \n$ of $\b$ as $\i^{\perp} \subseteq (\n)^{\perp}$.
\end{proof}

We lastly consider the index of the dual of abelian $\b$-modules.
\begin{proof}[Proof of Theorem~\ref{T:inda}]
Let $\a \subseteq \n$ be an abelian ideal of $\b$. Let $\beta\in \Delta^+(\a) \setminus \mathcal{A}(\a)$.
By Theorem~\ref{T:Astrongorth}(ii) there exists $\alpha \in \mathcal{A}(\a)$ such that $\beta - \alpha \in \Delta^+$.
 As $\mathcal{A}(\a) \subseteq \Delta^+_{\a}$ is a strongly orthogonal subset contained in an abelian ideal, it follows 
 from Lemma~\ref{L:addstrongorthog}(i) that
\begin{equation*}
[e_{\beta-\alpha},e_{\mathcal{A}(\a)}]=\sum_{\gamma\in\mathcal{A}(\a)
}[e_{\beta-\alpha},e_{\gamma}]=\lambda_{\beta-\alpha,\alpha} e_{\beta}.
\end{equation*}
It follows that $e_\beta \in [\b,e_{\mathcal{A}(\a)}]$.

Observe $\mathcal{A}(\a)$ is linearly independent as $\mathcal{A}(\a)$ is orthogonal. Thus for all $\gamma\in \mathcal{A}(\a)$ there exists $h\in \h$ such that $[h,e_{\mathcal{A}(\a)}]=e_{\gamma}$. 

Combining these observations, 
we obtain that $[\b,e_{\mathcal{A}(\a)}]=\a$. Therefore,
$\ind(\b,\a^*)=0$.
\end{proof}

\end{document}